\newtheorem{thm}{\bf{Theorem}}[section]
\newtheorem{lemma}[thm]{\bf{Lemma}}
\newtheorem{df}[thm]{\bf{Definition}}
\newtheorem{prop}[thm]{\bf{Proposition}}
\newtheorem{ex}[thm]{\bf{Example}}
\newtheorem{rem}[thm]{\bf{Remark}}
\begin{document}

\title[]{On the strong concavity of the dual function of an optimization problem}

\date{}

\maketitle

\begin{center}
\begin{tabular}{c}
Vincent Guigues\\
School of Applied Mathematics, FGV\\
Rio de Janeiro, Brazil\\ 
(\tt{vincent.guigues@fgv.br})
\end{tabular}
\end{center}

\begin{abstract} 
We provide three new proofs of the strong concavity of the dual function
of some convex optimization problems. 
For problems with nonlinear constraints, we show 
that the assumption of strong convexity of the objective cannot be weakened to convexity and
that the assumption
that the gradients of all constraints at the optimal solution
are linearly independent cannot be further weakened.
Finally, we illustrate our results
with several examples.\\
\end{abstract}

\par {\textbf{Keywords:} Convex Analysis, Duality, Strong Convexity, Optimization.}\\

\par AMS subject classifications: 26B25, 49N15.

\section{Introduction}

Consider the optimization problem 
\begin{equation} \label{pbinit0}
\left\{
\begin{array}{l}
\inf f(x)\\
A x \leq b
\end{array}
\right.
\end{equation}
where $f:\mathbb{R}^n \rightarrow \mathbb{R}$,
$b \in \mathbb{R}^q$, and $A$ is a $q \times n$ real matrix.

We make the following assumptions:
\begin{itemize}
\item[(H1)] $f$ is convex, differentiable, and $\nabla f$ is Lipschitz continuous on $\mathbb{R}^n$: there is $L(f)>0$ such that for every $x,y \in \mathbb{R}^n$ we have
$$
\|\nabla f(y)-\nabla f(x)\|_2 \leq L(f) \|y-x\|_2.
$$
\item[(H2)] The rows of the matrix $A$ are
linearly independent.
\end{itemize}
Let $\theta$ be the dual function of \eqref{pbinit0} given by
\begin{equation}\label{dualfunctionfirst}
\theta( \lambda ) = \displaystyle  \inf_{x \in \mathbb{R}^n} 
\mathcal{L}(x,\lambda):=f(x) + \lambda^T ( Ax  - b ),
\end{equation}
for $\lambda \in \mathbb{R}^q$.  The function $\theta$ is concave \cite{claude, Rockafellar}
and it was shown in 
\cite{vguiguesMP2020} that under Assumptions (H1), (H2), it 
is strongly concave on $\mathbb{R}^q$ with constant of strong concavity
$\frac{\lambda_{\min}(A A^T)}{L(f)}$. This property was also shown
in \cite{YuNeely2015} with stronger 
assumptions, namely assuming
$f$ strongly convex and twice continuously differentiable.

For more general convex problems
of the form 
\begin{equation}\label{defpblinnonlinstrongconc}
\inf_{x \in \mathbb{R}^n} \{ f(x) :  Ax \leq b, g_i(x) \leq 0, i=1,\ldots,p\},
\end{equation}
where $f,g_i:\mathbb{R}^n \rightarrow \mathbb{R}$ are convex,
$b \in \mathbb{R}^q$, and $A$ is a $q \times n$ real matrix, the local
strong concavity  of the dual function obtained dualizing all constraints
was shown in \cite{vguiguesMP2020} in a neighborhood of an optimal dual solution. Such a property was also shown in \cite{YuNeely2015} with stronger
assumptions, namely assuming functions $f$ and $g$ twice
continuously differentiable whereas in \cite{vguiguesMP2020} it was only
assumed that functions $f, g_i$ have Lipschitz continuous gradients (in both
proofs, strong convexity of $f$ was also used).

The strong concavity of 
the dual function can be used
to design efficient solution methods on the dual problem, for instance
the Drift-Plus-Penalty Algorithm described in \cite{YuNeely2015}.
It was also used in \cite{vguiguesMP2020}
to compute inexact cuts for the recourse function
 of a two-stage convex stochastic program.
 These inexact cuts are useful to design 
Inexact Stochastic Mirror Descent (ISMD)
Method, introduced in \cite{vguiguesMP2020},
which is an inexact variant of
Stochastic Mirror Descent (SMD, see \cite{nemlansh09}).

In this paper, we provide three new proofs for the strong concavity of $\theta$ 
given 
by \eqref{dualfunctionfirst}. The first one uses the  assumptions from \cite{vguiguesMP2020}, the second one applies when $f$ is coercive while the 
third ones applies when $f$ is twice continuously differentiable.
We also show that for problems of the form \eqref{defpblinnonlinstrongconc},
the assumption
that the gradients of all constraints at the optimal solution
are linearly independent cannot be further weakened
and that the assumption of strong convexity of the objective cannot
be weakened to convexity. Finally, several examples are given.
\section{Preliminaries} 

Given a convex function $f:\mathbb{R}^n\to \mathbb{R}\cup\{-\infty,+\infty\}$, as usual, 
	$\mathrm{dom}\,f:=\{x\in \mathbb{R}^n: f(x)< +\infty\}$ is its {\em (effective) domain} and for any $x\in \mathbb{R}^n$ with $|f(x)|<+\infty$ the subdifferential $\partial{f}(x)$ is defined by 
	$$
	  \partial{f}(x):=\{s\in \mathbb{R}^n: \langle s,y-x\rangle \leq f(y) -f(x),\;\forall y
		\in \mathbb{R}^n \},  
	$$
	and $\partial{f}(x):=\emptyset$ if $|f(x)|=+\infty$. The function $f$ is said to be {\em proper} if $\mathrm{dom}\,f\not=\emptyset$ and $f$ does not take the value $-\infty$. The Legendre-Fenchel conjugate 
	$f^*$ on $\mathbb{R}^n$ is defined by 
	$$
	   f^*(y):=\sup_{x\in \mathbb{R}^n}\{\langle y,x\rangle -f(x)\} \quad\text{for all}\; 
		 y\in \mathbb{R}^n.
	$$
	Similarly, given a concave function $g:\mathbb{R}^n\to \mathbb{R}\cup\{-\infty,+\infty\}$ (i.e., 
	$-g$ is convex), its {\em (effective) domain} is the set 
	$\{x\in \mathbb{R}^n: g(x) > -\infty\}$. We refer to \cite{claude,Rockafellar} for the above concepts.

In what follows, $X \subset \mathbb{R}^n$ is a nonempty convex set, and its relative interior will be denoted by $\mathrm{ri}\,X$.
 
\begin{df}[Strongly convex functions]\label{defsconv}  A function $f:\mathbb{R}^n \rightarrow \mathbb{R}\cup \{+\infty\}$ is strongly convex 
on $X$ with constant of strong convexity $\alpha > 0$  with respect to a norm $\|\cdot\|$ if
for any $x, y \in X\cap \mathrm{dom}\,f$ we have 
\begin{equation}\label{scfdomain}
f( t x  + (1-t)y ) \leq t f(x) + (1-t)f(y)  - \frac{\alpha t(1-t)}{2} \|y-x\|^2, 
\end{equation}
for all $0 \leq t \leq 1$.
\end{df}
We can show that for
lower semicontinuous convex functions, strong
convexity on the relative interior
of the domain implies strong convexity
everywhere. More precisely, we have
the following:
\begin{lemma}\label{simplelemmaconv} Let
$f:\mathbb{R}^n \rightarrow \mathbb{R}\cup \{+\infty\}$ 
be a proper lower semicontinuous convex function which is 
strongly convex on 
the relative interior of its domain
with constant of strong convexity $\alpha>0$
with respect to norm $\|\cdot\|$, i.e., $f$
satisfies \eqref{scfdomain}
for $x, y \in \emph{ri(dom}(f))$ and
$0 \leq t \leq 1$. Then
$f$ is strongly convex on $\mathbb{R}^n$ 
with the same constant of strong convexity $\alpha>0$
with respect to norm $\|\cdot\|$
\end{lemma}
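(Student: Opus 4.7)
The plan is to reduce the inequality \eqref{scfdomain} on $\dom(f)$ to its known version on $\ri(\dom(f))$ by an approximation argument that exploits both the convexity (giving a $\limsup$ bound) and the lower semicontinuity (giving a $\liminf$ bound) of $f$. Note first that if $x \notin \dom(f)$ or $y \notin \dom(f)$, then $t f(x) + (1-t) f(y) = +\infty$, so the inequality holds trivially; hence it suffices to fix arbitrary $x,y \in \dom(f)$ (possibly on the relative boundary) and a parameter $t \in [0,1]$, and establish \eqref{scfdomain} for them.

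The approximation step proceeds as follows. Choose once and for all a point $x_0 \in \ri(\dom(f))$, which exists since $f$ is proper, and for integers $k \geq 1$ set
\begin{equation*}
x_k := (1-1/k)\, x + (1/k)\, x_0, \qquad y_k := (1-1/k)\, y + (1/k)\, x_0, \qquad z_k := t x_k + (1-t) y_k.
\end{equation*}
Writing $z := t x + (1-t) y \in \dom(f)$ (by convexity), we have $z_k = (1-1/k)\, z + (1/k)\, x_0$. The standard line-segment principle for convex sets then guarantees $x_k, y_k, z_k \in \ri(\dom(f))$ for every $k \geq 1$.

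Next I would show that $f(x_k) \to f(x)$, $f(y_k) \to f(y)$, and $f(z_k) \to f(z)$. For $f(x_k)$, convexity along the segment $[x_0, x]$ yields $f(x_k) \leq (1-1/k)\, f(x) + (1/k)\, f(x_0)$, so $\limsup_k f(x_k) \leq f(x)$; meanwhile, since $x_k \to x$ and $f$ is lower semicontinuous, $\liminf_k f(x_k) \geq f(x)$. Combining these gives $f(x_k) \to f(x)$, and the arguments for $y_k$ and $z_k$ are identical (note that $f(z) < +\infty$ because $z \in \dom(f)$).

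Now apply the hypothesis of strong convexity on $\ri(\dom(f))$ to $x_k, y_k$ and the parameter $t$:
\begin{equation*}
f(z_k) \,=\, f(t x_k + (1-t) y_k) \,\leq\, t f(x_k) + (1-t) f(y_k) - \frac{\alpha \, t (1-t)}{2}\, \|x_k - y_k\|^2.
\end{equation*}
Letting $k \to \infty$, the left-hand side tends to $f(z)$, the first two terms on the right tend to $t f(x) + (1-t) f(y)$, and $\|x_k - y_k\| = (1-1/k) \|x-y\| \to \|x-y\|$. This yields \eqref{scfdomain} at $(x,y,t)$, completing the argument. The only delicate step is the convergence $f(z_k) \to f(z)$, which is why the use of lower semicontinuity combined with convexity along the segment through the interior point $x_0$ is essential; once that is in hand the remainder is a direct passage to the limit.
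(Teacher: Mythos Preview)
Your proof is correct and follows essentially the same approach as the paper's: both pick a reference point $x_0\in\ri(\dom f)$, push $x$ and $y$ along segments toward $x_0$ so the perturbed points lie in $\ri(\dom f)$, apply the strong convexity inequality there, and pass to the limit using lower semicontinuity together with convexity along the segment. The only cosmetic differences are that you use a discrete sequence $1/k$ rather than a continuous parameter $\theta\to 0^+$, and you prove the convergence $f(x_k)\to f(x)$ explicitly rather than citing it (in fact you prove slightly more than needed at $z$, since only $f(z)\le\liminf_k f(z_k)$ is required).
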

\begin{proof} 
Take any $x, y \in \mbox{dom}(f)$ and
$0<t<1$. 
We want to show \eqref{scfdomain}.
Since
$\mbox{ri}(\mbox{dom}(f))$ is nonempty, we can
take an arbitrary point 
$x_0 \in  \mbox{ri}(\mbox{dom}(f))$.
Observe that since $f$ is lower semicontinuous
and convex  with $x,y \in \mathrm{dom}\,f$, 
using Proposition 1.2.5 in \cite{claude} we have
\begin{equation}\label{firstconvlemm}
\begin{array}{l}
f(x)=\displaystyle{\varliminf_{u \rightarrow x}} f(u)
=\displaystyle{\lim_{\theta \rightarrow 0, \theta>0}} f(x+\theta(x_0-x)),\\
f(y)=\displaystyle{\varliminf_{u \rightarrow y}} f(u)
=\lim_{\theta \rightarrow 0, \theta>0} f(y+\theta(x_0-y)),
\end{array}
\end{equation}
and by lower semicontinuity of $f$ at  $tx+(1-t)y$ we also have
\begin{equation}
\begin{array}{lcl}
f(tx + (1-t)y)&\leq &  \displaystyle{\lim_{\theta \rightarrow 0, \theta>0}} f(tx+(1-t)y+\theta(x_0-tx-(1-t)y))\\  
&=& \displaystyle{\lim_{\theta \rightarrow 0, \theta>0}} f(t (x+\theta(x_0-x)) + (1-t)(y+\theta(x_0-y)))\\
& \leq & \displaystyle{\lim_{\theta \rightarrow 0, \theta>0}} t f(x+\theta(x_0-x))+ (1-t)f(y+\theta(x_0-y))-\frac{\alpha t(1-t)}{2}\|(1-\theta)(x-y)\|^2\\
& \stackrel{\eqref{firstconvlemm}}{=} & tf(x) +(1-t)f(y)-\frac{\alpha t (1-t)}{2}\|y-x\|^2,
\end{array}
\end{equation}
where in the inequality above we have used the fact that 
$x+\theta(x_0-x), y+\theta(x_0-y) \in 
\mbox{ri(dom}(f))$
and that $f$ is strongly convex on $\mbox{ri(dom}(f))$.$\hfill$
\end{proof}

It is well known 
(see for instance Proposition 6.1.2 in \cite{claude}) that if $f$ is strongly convex
with constant $\alpha$ with respect to norm
$\|\cdot\|$ 
and subdifferentiable on $X$ (i.e.,
the subdifferential $\partial f(x)$
of $f$ at $x$ is nonempty for every
$x \in X$) then for all $x,y \in X$, we have
$$
f(y) \geq f(x) + s^T (y-x)   + \frac{\alpha}{2}\|y-x\|^2, \;\forall s \in \partial f(x).
$$
Therefore, using the notation (here and in
what follows) $\langle x , y \rangle =x^T y$
for $x,y \in \mathbb{R}^n$,
since for a function $f$ satisfying (H1) we must have
$$
f(y) \leq f(x) + \langle \nabla f(x), y-x \rangle 
+ \frac{L(f)}{2}\|y-x\|_2^2,
$$
for all $x,y \in \mathbb{R}^n$, if $f$ satisfies
(H1) and is strongly convex on $\mathbb{R}^n$
with constant of strong convexity $\alpha$ with respect
to norm $\|\cdot\|_2$ then we must have $\alpha \leq L(f)$.

We also recall that  a convex function $f: \mathbb{R}^n \rightarrow \mathbb{R}\cup \{+\infty\}$  is strongly convex on 
$\mbox{ri}(\mbox{dom}(f))$ with 
constant of strong convexity $\alpha > 0$  with respect to norm 
$\|\cdot\|$ if and only if
for every $x, y \in \mbox{ri}(\mbox{dom}(f))$ we have
\begin{equation}\label{charactscaux}
\langle \sigma -  s , y-x \rangle  \geq \alpha \|y-x\|^2,
\;\mbox{ for all }\sigma \in \partial f(y), s \in \partial f(x).
\end{equation}
For a proof of the above characterization \eqref{charactscaux},
see the proof of Theorem 6.1.2 in  \cite{claude}.

Finally, if $f: \mathbb{R}^n \rightarrow \mathbb{R}$ is twice differentiable
then 
$f$ is strongly convex on 
$\mathbb{R}^n$ with constant of strong convexity $\alpha > 0$ 
with respect to norm $\|\cdot\|_2$
if and only if for every $x \in \mathbb{R}^n$, we have
$\nabla^2 f(x) \succeq \alpha I_n$(see for instance Proposition 1 in  \cite{bauschke9}).

\begin{df}[Strongly concave functions] A function $f: \mathbb{R}^n \rightarrow \mathbb{R}\cup \{-\infty\}$
is strongly concave on $X$ with constant of strong concavity $\alpha > 0$ 
with respect to norm $\|\cdot\|$
if and only if $-f$ is strongly convex
on $X$ with constant of strong convexity $\alpha > 0$ 
with respect to norm $\|\cdot\|$.
\end{df}

We recall two well known results of convex analysis that will be used
in the sequel. 
\begin{prop}\label{conjstconvex} 
Let $f:\mathbb{R}^n \rightarrow \mathbb{R}\cup \{+\infty\}$ be a convex and lower semicontinuous function. Then
$f^*$ is strongly convex on $\mathbb{R}^n$ with constant of strong convexity $\alpha>0$ for norm $\|\cdot\|_2$ if and only if $f$ is differentiable and $\nabla f$ is Lipschitz continuous  on $\mathbb{R}^n$ with
constant $1/\alpha$ for norm $\|\cdot\|_2$.
\end{prop}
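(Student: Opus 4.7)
The plan is to exploit Fenchel--Young duality $y\in\partial f(x)\iff x\in\partial f^*(y)$ (available since $f$ is proper, convex and lower semicontinuous, so $f^{**}=f$) together with the subgradient form of strong convexity recalled just before the proposition. For the sufficiency direction, I introduce the auxiliary convex function $\varphi(x):=f(x)-\langle y,x\rangle$, whose gradient inherits the Lipschitz constant $L:=1/\alpha$. Minimizing its quadratic majorant over a single gradient step yields
\[
-f^*(y)=\inf_{z}\varphi(z)\leq \varphi\bigl(x-\alpha\nabla\varphi(x)\bigr)\leq f(x)-\langle y,x\rangle-\tfrac{\alpha}{2}\|\nabla f(x)-y\|_2^2
\]
for all $x,y\in\mathbb{R}^n$. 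Taking $x=x_0$ with $y_0:=\nabla f(x_0)$ and using the Fenchel--Young equality $f^*(y_0)=\langle y_0,x_0\rangle-f(x_0)$ then rearranges to the subgradient form of $\alpha$-strong convexity of $f^*$,
\[
f^*(y)\geq f^*(y_0)+\langle x_0,y-y_0\rangle+\tfrac{\alpha}{2}\|y-y_0\|_2^2.
\]
Averaging this bound at $y_0=ty_1+(1-t)y_2\in\ri(\dom f^*)$ (where $\partial f^*(y_0)$ is automatically nonempty and supplies an admissible $x_0$) against the corresponding bounds at $y_1,y_2$ gives the inequality of Definition \ref{defsconv} on $\ri(\dom f^*)$, and Lemma \ref{simplelemmaconv} extends strong convexity to all of $\mathbb{R}^n$.

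For the necessity direction, I apply the subgradient inequality of an $\alpha$-strongly convex $f^*$ at $y_1$ (with $x_1\in\partial f^*(y_1)$) and at $y_2$ (with $x_2\in\partial f^*(y_2)$) and sum the two to obtain
\[
\langle x_1-x_2,\,y_1-y_2\rangle \;\geq\; \alpha\|y_1-y_2\|_2^2.
\]
Cauchy--Schwarz then yields $\|y_1-y_2\|_2\leq \alpha^{-1}\|x_1-x_2\|_2$. Translating via $x_i\in\partial f^*(y_i)\iff y_i\in\partial f(x_i)$, this reads: for any $x_1,x_2\in\mathbb{R}^n$ and any $y_i\in\partial f(x_i)$, one has $\|y_1-y_2\|_2\leq \alpha^{-1}\|x_1-x_2\|_2$. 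Specializing to $x_1=x_2$ forces $\partial f(x)$ to be a singleton wherever it is nonempty, so $f$ is differentiable and the bound is precisely $\alpha^{-1}$-Lipschitz continuity of $\nabla f$.

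The main obstacle is the domain bookkeeping for the direction ($\Rightarrow$): one must confirm $\dom f=\mathbb{R}^n$ so that $f$ is genuinely differentiable at every point. I expect this to follow from the fact that strong convexity of $f^*$ forces a quadratic minorant on $\dom f^*$, whence a short case analysis (according to whether $\dom f^*$ is bounded or unbounded) shows that $\sup_y\{\langle x,y\rangle-f^*(y)\}$ is finite for every $x\in\mathbb{R}^n$; hence $f=f^{**}$ is real-valued on $\mathbb{R}^n$ and consequently subdifferentiable there, so the Lipschitz estimate above applies everywhere.
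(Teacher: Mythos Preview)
Your argument is correct. The paper itself does not supply a proof of this proposition; it simply refers the reader to Proposition~12.60 in Rockafellar--Wets. Your direct approach---the descent lemma applied to $\varphi(x)=f(x)-\langle y,x\rangle$ for the $(\Leftarrow)$ direction, and summed subgradient inequalities plus Cauchy--Schwarz for the $(\Rightarrow)$ direction, with Lemma~\ref{simplelemmaconv} handling the passage from $\ri(\dom f^*)$ to all of $\mathbb{R}^n$---is standard and essentially self-contained. One minor remark: the case distinction (bounded versus unbounded $\dom f^*$) in your last paragraph is unnecessary, since the quadratic minorant $f^*(y)\geq f^*(y_0)+\langle x_0,y-y_0\rangle+\tfrac{\alpha}{2}\|y-y_0\|_2^2$ on $\dom f^*$ already forces $f^{**}(x)=\sup_{y\in\dom f^*}\{\langle x,y\rangle-f^*(y)\}$ to be finite for every $x$ regardless of whether $\dom f^*$ is bounded.
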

For a proof of the previous proposition, see 
the proof of Proposition 12.60 in \cite{Rockafellar}. The following result is known as Baillon-Haddad Theorem 
that we specialize
to functions $f:\mathbb{R}^n \rightarrow \mathbb{R}$:
\begin{thm}\label{theorBailHadd}\cite{baillonhaddad} 
Let 
$f:\mathbb{R}^n \rightarrow \mathbb{R}$
be convex, differentiable and satisfying Assumption (H1)
for some $0< L(f)<\infty$. Then $\nabla f$ 
is $1/L(f)$-co-coecive, meaning that for all
$x,y \in \mathbb{R}^n$ we have 
 \begin{equation}\label{cocoercive}
\langle y - x , \nabla f(y ) - \nabla f(x ) \rangle  \geq \frac{1}{L(f)}\|\nabla f( y )  -  \nabla f ( x)\|_2^2.
\end{equation}
\end{thm}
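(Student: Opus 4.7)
The plan is to reduce the desired co-coercivity of $\nabla f$ to strong convexity of the Fenchel conjugate $f^*$, using the Fenchel--Young pairing between $\partial f$ and $\partial f^*$ to transport inequalities between the primal and dual spaces. This is the natural route, since Proposition \ref{conjstconvex} has just been recalled and converts the Lipschitz-gradient hypothesis (H1) on $f$ into $1/L(f)$-strong convexity of $f^*$ with respect to $\|\cdot\|_2$.

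First I would observe that any convex $f:\mathbb{R}^n\to\mathbb{R}$ satisfying (H1) is continuous, hence lower semicontinuous, so Proposition \ref{conjstconvex} applies and $f^*$ is strongly convex on $\mathbb{R}^n$ with constant $1/L(f)$ for $\|\cdot\|_2$. Next, for arbitrary $x,y\in\mathbb{R}^n$ I would set $u:=\nabla f(x)$ and $v:=\nabla f(y)$. Because $f$ is convex and differentiable, $\partial f(x)=\{u\}$, and by the Fenchel--Young equality this is equivalent to $x\in\partial f^*(u)$; symmetrically $y\in\partial f^*(v)$. In particular $u,v\in\mathrm{dom}\,f^*$ and $f^*$ is subdifferentiable at both points.

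At this stage I would invoke the subgradient form of strong convexity recalled in the text just after Lemma \ref{simplelemmaconv}, applied to $f^*$ at $u$ and at $v$:
\begin{align*}
f^*(v) &\geq f^*(u) + \langle x, v-u\rangle + \tfrac{1}{2L(f)}\|v-u\|_2^2,\\
f^*(u) &\geq f^*(v) + \langle y, u-v\rangle + \tfrac{1}{2L(f)}\|u-v\|_2^2.
\end{align*}
Adding these two inequalities cancels the $f^*$-values and yields $\langle y-x, v-u\rangle \geq \tfrac{1}{L(f)}\|v-u\|_2^2$, which is exactly \eqref{cocoercive} after substituting $u=\nabla f(x)$ and $v=\nabla f(y)$.

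The main obstacle I anticipate is justifying the subgradient inequality at $u$ and $v$, since the characterization \eqref{charactscaux} is stated only on $\mathrm{ri}(\mathrm{dom}\,f^*)$ and in general $\mathrm{dom}\,f^*$ need not be the whole space. This is resolved by noting that strong convexity on $\mathrm{dom}\,f^*$ (delivered by Proposition \ref{conjstconvex}, and promoted globally by Lemma \ref{simplelemmaconv} if one prefers) implies the subgradient inequality at every point where the subdifferential is nonempty, via the standard $t\to 1$ limit argument on the defining inequality \eqref{scfdomain}; and the nonemptiness of $\partial f^*(u)\ni x$ and $\partial f^*(v)\ni y$ is guaranteed by the Fenchel--Young step above.
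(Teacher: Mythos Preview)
Your proposal is correct and follows essentially the same route as the paper: the paper's Remark states that Theorem \ref{theorBailHadd} follows from Proposition \ref{conjstconvex} together with property \eqref{charactscaux}, i.e., one passes to $f^*$, uses its $1/L(f)$-strong convexity, and reads off the monotonicity inequality for $\partial f^*$ at $u=\nabla f(x)$, $v=\nabla f(y)$ with subgradients $x,y$. You reproduce exactly this argument, the only cosmetic difference being that you sum two subgradient lower bounds for $f^*$ instead of quoting \eqref{charactscaux} directly; your added discussion of the domain issue (why the subgradient inequality is available at $u,v$ even without knowing they lie in $\mathrm{ri}(\mathrm{dom}\,f^*)$) is a welcome clarification that the paper's remark leaves implicit.
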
 

\begin{rem}
Theorem \ref{theorBailHadd} 
follows from Proposition \ref{conjstconvex} and
property \eqref{charactscaux}.
\end{rem}

\section{Problems with linear constraints}

\subsection{Proofs of the strong concavity of the dual function}

In this section we provide several proofs of the following proposition, first proved in \cite{vguiguesMP2020}.
\begin{prop}\label{dualfunctionpbinit0}
Let Assumptions (H1) and (H2) hold.  Then the dual function $\theta$
given by \eqref{dualfunctionfirst}
 is strongly concave on $\mathbb{R}^q$
 with constant of strong concavity $ \frac{\lambda_{\min}( A A^T )}{L(f)}$ with respect to norm $\|\cdot\|_2$.
\end{prop}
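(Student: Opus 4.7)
The plan is to pass through the Fenchel conjugate of $f$, which lets us import the Lipschitz constant $L(f)$ of $\nabla f$ as a strong convexity modulus via Proposition \ref{conjstconvex}, and then use Assumption (H2) only in the form of a one-line eigenvalue estimate. First I would rewrite the dual function as
$$
\theta(\lambda) = -b^\top \lambda + \inf_{x\in\mathbb{R}^n}\bigl(f(x) + (A^\top\lambda)^\top x\bigr) = -b^\top\lambda - f^*(-A^\top\lambda),
$$
so that proving strong concavity of $\theta$ is equivalent to proving strong convexity (in $\lambda$) of $\lambda \mapsto f^*(-A^\top\lambda)$, since the affine term $-b^\top\lambda$ contributes nothing.

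Next I would invoke Proposition \ref{conjstconvex}. Under Assumption (H1), $f$ is convex, everywhere finite (hence lower semicontinuous), and has $L(f)$-Lipschitz gradient, so the proposition yields that $f^*$ is strongly convex on $\mathbb{R}^n$ with constant $1/L(f)$ with respect to $\|\cdot\|_2$. Thus for any $\lambda_1,\lambda_2$ in the effective domain of $\theta$, which is exactly $\{\lambda : -A^\top\lambda\in\mathrm{dom}\,f^*\}$, and any $t\in[0,1]$,
$$
f^*\bigl(-A^\top(t\lambda_1+(1-t)\lambda_2)\bigr)\le tf^*(-A^\top\lambda_1) + (1-t)f^*(-A^\top\lambda_2) - \frac{t(1-t)}{2L(f)}\bigl\|A^\top(\lambda_2-\lambda_1)\bigr\|_2^2.
$$

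Assumption (H2) enters only at the last step, through the elementary bound $\|A^\top y\|_2^2 = y^\top(AA^\top)y \geq \lambda_{\min}(AA^\top)\|y\|_2^2$, where $\lambda_{\min}(AA^\top)>0$ precisely because the rows of $A$ are linearly independent. Substituting this into the previous display, adding the affine term $-b^\top\lambda$ on both sides, and reversing signs, one obtains
$$
\theta\bigl(t\lambda_1+(1-t)\lambda_2\bigr) \geq t\theta(\lambda_1) + (1-t)\theta(\lambda_2) + \frac{t(1-t)\,\lambda_{\min}(AA^\top)}{2L(f)}\|\lambda_2-\lambda_1\|_2^2,
$$
which is exactly strong concavity with the claimed modulus.

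I do not expect a genuine obstacle here: once Proposition \ref{conjstconvex} is available, the argument collapses to a conjugate-duality rewriting and a single eigenvalue estimate. The only thing to watch is a bookkeeping point, namely that strong convexity of $f^*$ is only asserted on $\mathrm{dom}\,f^*$, so one must note that the midpoint $-A^\top(t\lambda_1+(1-t)\lambda_2)$ lies in $\mathrm{dom}\,f^*$ whenever the endpoints do (which is immediate from convexity of the domain), and that this matches the natural effective-domain interpretation of strong concavity for $\theta$. Provided one is comfortable applying Proposition \ref{conjstconvex} in the possibly extended-valued setting for $f^*$, nothing else is required.
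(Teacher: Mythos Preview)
Your argument is correct and is essentially the first proof the paper presents (the one recalled from \cite{vguiguesMP2020}): rewrite $\theta(\lambda)=-\lambda^\top b - f^*(-A^\top\lambda)$, use Proposition~\ref{conjstconvex} to get strong convexity of $f^*$ with constant $1/L(f)$, and then push through the eigenvalue bound $\|A^\top y\|_2^2\ge \lambda_{\min}(AA^\top)\|y\|_2^2$ coming from (H2). The paper also supplies two alternative proofs (one via Baillon--Haddad and the subdifferential characterization of strong convexity, one via Lipschitz continuity of the optimal multiplier map for the value function when $f$ is coercive), but your route coincides with the primary one.
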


We first recall the proof of Proposition \ref{dualfunctionpbinit0} given in 
\cite{vguiguesMP2020}.\\
\par \textbf{Proof of Proposition \ref{dualfunctionpbinit0} from \cite{vguiguesMP2020}.} 
The dual function of \eqref{pbinit0} given by \eqref{dualfunctionfirst}
can be written
\begin{equation}\label{thetaintermsoffstar}
\begin{array}{lll}
\theta( \lambda )& =&\displaystyle  \inf_{x \in \mathbb{R}^n} \{f(x) + \lambda^T  ( A x - b ) \} = 
-\lambda^T b - \sup_{x \in \mathbb{R}^n} \{ -x^T A^T \lambda  -f(x) \} \\
& = & - \lambda^T b - f^*( -A^T \lambda ) \mbox{ by definition of }f^*.
\end{array}
\end{equation}
From Assumption (H1) and Proposition \ref{conjstconvex}, 
$-f^*$ is strongly concave with constant of strong concavity 
$1/L(f)$. Assumption (H2) implies 
$\mbox{Ker}(AA^T)=\{0\}$ which,
together with the strong concavity of 
$-f^*$, easily implies that 
$\lambda \rightarrow -f^*(-A^T\lambda)$ is strongly concave
with constant of strong concavity $\frac{\lambda_{\min}(A A^T)}{L(f)}$
(see Proposition 2.5 in \cite{vguiguesMP2020} for details)
and therefore so is $\theta$ which is the sum of a linear function
and of $-f^*(-A^T\lambda)$.$\hfill \square$\\

\par Our second proof is based on Theorem \ref{theorBailHadd}.\\

\par {\textbf{Second proof of Proposition \ref{dualfunctionpbinit0}.}} 
If dom($-\theta$) is empty there
is nothing to show. Let us now
assume that dom($-\theta$) is
nonempty. 
Since $-\theta$ is convex,
the subdifferential
$\partial (-\theta)(\lambda)$
is nonempty for every $\lambda$
in the relative
interior of dom($-\theta$). 
We first show that for 
$x,y \in \mbox{ri}(\mbox{dom}(-\theta))$
relation \eqref{charactscaux}
holds for $f=-\theta$, $\alpha=\frac{\lambda_{\min}(A A^T )}{L(f)}$
and $\|\cdot\|=\|\cdot\|_2$. Let us take  
$\lambda_1, \lambda_2 \in 
\mbox{ri}(\mbox{dom}(-\theta))$.
Then
$\partial (-\theta)(\lambda_1)$
and 
$\partial (-\theta)(\lambda_2)$
are nonempty.  Using for instance Lemma 2.1 in \cite{guiguessiopt2016} or 
Corollary 4.5.3 p.273
  in \cite{claude}, we have 
\begin{equation}\label{gradtheta1}
-\partial(- \theta)(\lambda) = \{Ax-b : x \in S(\lambda)\} 
\end{equation}
where $S(\lambda)$ is the set of optimal solutions of \eqref{dualfunctionfirst}.
The latter equality yields that 
$S(\lambda_1)$ and $S(\lambda_2)$ are nonempty. Next, for
any $x_1 \in S(\lambda_1)$
and $x_2 \in S(\lambda_2)$,
by first order optimality conditions,
we have
\begin{equation}\label{eqxlamb} 
\nabla f( x_i  ) + A^T \lambda_i = 0,\;i=1,2.
\end{equation}
Now for any  
$s_2  \in {\partial} (-\theta)(\lambda_2)$
and $s_1  \in {\partial}(- \theta)(\lambda_1)$
we can find $x_1 \in S(\lambda_1)$
and $x_2 \in S(\lambda_2)$
such that 
$s_i =-(Ax_i-b)$,
$i=1,2$, which implies
\begin{equation}\label{secondsctheta2}
\begin{array}{lcl}
 \left \langle s_2  - s_1  ,  
\lambda_2 - \lambda_1  \right \rangle 
& = &
\displaystyle  - \langle A (x_2 - x_1) , \lambda_2 - \lambda_1 \rangle \mbox{ using }\eqref{gradtheta1}\\
& = &\displaystyle  - \langle x_2 - x_1 , A^T( \lambda_2 - \lambda_1 )  \rangle  \\
&\displaystyle  = & \langle x_2 - x_1 , \nabla f( x_2 ) -  \nabla f(x_1 ) \rangle 
\mbox{ using }\eqref{eqxlamb}\\
& \geq & \displaystyle   (1/L(f)) \|\nabla f( x_2) -  \nabla f(x_1) \|_2^2\mbox{ using }\eqref{cocoercive}\\
& = & \displaystyle (1/L(f)) \| A^T ( \lambda_2 - \lambda_1 )  \|_2^2 \mbox{ using }\eqref{eqxlamb}\\
& \geq &\displaystyle  \frac{\lambda_{\min}(A A^T )}{L(f)} \| \lambda_2 - \lambda_1   \|_2^2.
\end{array}
\end{equation}
Recalling characterization 
\eqref{charactscaux} of strong convexity, we have shown that $-\theta$ is strongly convex with constant
of strong convexity $\frac{\lambda_{\min}(A A^T )}{L(f)}$ with respect to 
norm $\|\cdot\|_2$ on 
$\mbox{ri}(\mbox{dom}(-\theta))$.
Since $-\theta$ is convex
and lower semicontinuous on $\mathbb{R}^q$,
we can apply Lemma \ref{simplelemmaconv}
with $X=\mathbb{R}^q$
to obtain the strong convexity
of $-\theta$ (or equivalently the
strong concavity of $\theta$)
on $\mathbb{R}^q$ with constant
of strong convexity 
$\frac{\lambda_{\min}(A A^T )}{L(f)}$ with respect to 
norm $\|\cdot\|_2$.
$\hfill \square$\\

Our next proof of the strong concavity of the dual function applies when the objective
function $f$ is coercive. 
It is based on properties
of the value function.
\begin{prop}
Let Assumptions (H1) and (H2) hold
and assume that $f$ is 
coercive in the sense that $f(x)\to +\infty$ as 
$\|x\| \to +\infty$.  Then the dual function $\theta$
given by \eqref{dualfunctionfirst}
 is strongly concave on $\mathbb{R}_{+}^q$
 with constant of strong concavity 
 $\frac{\lambda_{\min}(A A^T )}{L(f)}$ with respect to norm $\|\cdot\|_2$.
\end{prop}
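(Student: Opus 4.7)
The plan is to introduce the value function $v(y) = \inf_x\{f(x) : Ax \leq b+y\}$ on $\mathbb{R}^q$ and invoke Proposition \ref{conjstconvex} applied to $v$ (rather than to $f$ as in the first proof). First, I would observe that (H2) forces the rows of $A$ to span $\mathbb{R}^q$, so $\{x : Ax \leq b+y\}$ is nonempty for every $y$; combined with convexity and coercivity of $f$, this yields that $v$ is finite and attained on all of $\mathbb{R}^q$, hence a finite, continuous, convex function. A direct computation using \eqref{thetaintermsoffstar} shows that $v^*(z) = -z^T b + f^*(A^T z)$ for $z \leq 0$ and $v^*(z) = +\infty$ otherwise, so that
$$-\theta(\lambda) \;=\; v^*(-\lambda) \qquad \text{for every } \lambda \geq 0.$$

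With this identification, the goal reduces to showing that $\nabla v$ exists and is Lipschitz on $\mathbb{R}^q$ with constant $L(f)/\lambda_{\min}(AA^T)$: Proposition \ref{conjstconvex} will then yield strong convexity of $v^*$ on $\mathbb{R}^q$ with constant $\lambda_{\min}(AA^T)/L(f)$, and the isometry $\lambda \mapsto -\lambda$ will transport this to strong convexity of $-\theta$ on $\mathbb{R}_+^q$. To establish the $C^{1,1}$ property of $v$, I would pick $y_1, y_2 \in \mathbb{R}^q$ and, for $i=1,2$, any KKT pair $(x_i, \lambda_i)$ for the problem defining $v(y_i)$; their existence for convex problems with linear constraints and attained primum is standard, and a classical sensitivity result (e.g., Corollary 4.5.3 in \cite{claude}) gives $-\lambda_i \in \partial v(y_i)$. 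Stationarity $\nabla f(x_i) = -A^T \lambda_i$ together with (H2) yields
$$\lambda_{\min}(AA^T)\,\|\lambda_1-\lambda_2\|_2^2 \;\leq\; \|A^T(\lambda_1-\lambda_2)\|_2^2 \;=\; \|\nabla f(x_1)-\nabla f(x_2)\|_2^2,$$
and Baillon--Haddad (Theorem \ref{theorBailHadd}) combined with stationarity rewrites the right-hand side as $-L(f)\,\langle \lambda_1-\lambda_2, A(x_1-x_2)\rangle$. Expanding $\langle \lambda_1-\lambda_2, A(x_1-x_2)\rangle$ and invoking complementary slackness $\lambda_i^T(Ax_i-b-y_i)=0$ together with primal feasibility and $\lambda_j \geq 0$ (so that $\lambda_j^T(Ax_i-b-y_i) \leq 0$) yields the key inequality $\langle \lambda_1-\lambda_2, A(x_1-x_2)\rangle \geq \langle \lambda_1-\lambda_2, y_1-y_2\rangle$. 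Combined with Cauchy--Schwarz, this chain produces
$$\|\lambda_1 - \lambda_2\|_2 \;\leq\; \frac{L(f)}{\lambda_{\min}(AA^T)}\,\|y_1 - y_2\|_2.$$

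Taking $y_1 = y_2$ in the last bound shows $\lambda(y)$ is unique, so $\partial v(y)$ is a singleton for every $y$, hence $v$ is differentiable with $\nabla v(y) = -\lambda(y)$, and the same bound is the Lipschitz property of $\nabla v$. The main obstacle I expect is precisely this multiplier estimate: since $f$ is only assumed convex (not strongly convex), the primal solution $x_i$ need not be unique, so the argument must work for arbitrary KKT pairs and extract both uniqueness of $\lambda(y)$ and its Lipschitz regularity simultaneously --- this is exactly what the combination of Baillon--Haddad with the complementary-slackness inequality delivers. Once the $C^{1,1}$ property of $v$ is in hand, the passage through Proposition \ref{conjstconvex} and the identity $-\theta(\lambda)=v^*(-\lambda)$ on $\mathbb{R}_+^q$ is routine and yields the claimed strong concavity constant $\lambda_{\min}(AA^T)/L(f)$.
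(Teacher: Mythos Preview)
Your proposal is correct and follows essentially the same route as the paper: introduce the value function of the perturbed problem, use KKT stationarity together with Baillon--Haddad and the complementary-slackness/feasibility inequalities to obtain a Lipschitz bound on the multiplier map, deduce that $v$ is $C^{1,1}$, and then invoke Proposition~\ref{conjstconvex} and the identity $-\theta=v^*$ (up to the sign convention in the perturbation) on the nonnegative orthant. The only cosmetic differences are your sign convention $Ax\le b+y$ versus the paper's $Ax\le b-c$, and that you extract uniqueness of the multiplier by specializing the Lipschitz estimate to $y_1=y_2$, whereas the paper first argues uniqueness directly from $\mathrm{Ker}(A^{T})=\{0\}$ before establishing the Lipschitz bound.
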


\begin{proof}
Let $v$ be the value function 
given by
\begin{equation}\label{valuedefv}
v(c)=\left\{
\begin{array}{l}
\inf f(x)\\
A x - b + c \leq  0,
\end{array}
\right.
\end{equation}
for $c \in \mathbb{R}^q$.

We first show that (i) $v$ is differentiable
and (ii) $\nabla v$
is Lipschitz continuous with 
Lipschitz constant
$\frac{L(f)}{\lambda_{\min}(A A^T )}$.

Let us show (i). By Assumption (H2), $A$ is
surjective meaning that for any
$y\leq b-c$ we can find $x$ satisfying
$Ax=y\leq b-c$ which is feasible
for \eqref{valuedefv} (due to
(H2), problem \eqref{valuedefv}
is in fact strictly feasible
for every $c$).
Therefore, for any $c$,
problem \eqref{valuedefv}
is convex, with 
polyhedral nonempty feasible set, and
continuous
coercive objective function, implying
that it has a finite optimal value
$v(c)$ and optimal solutions.
Also since $v$ is convex and 
finite for any $c$, it is a continuous convex function, hence its subdifferential
is nonempty at any $c$ and
is given by the set of optimal dual solutions of the dual problem
\begin{equation}\label{duallac}
\sup_{\lambda \geq 0} \theta_c(\lambda),
\end{equation}
where
$$
\theta_c(\lambda)=\inf_{x \in \mathbb{R}^n} f(x) + \lambda^T(Ax-b+c).
$$
Take $c \in \mathbb{R}^q$.
To show that $v$ is differentiable 
at $c$,
if suffices
to show that $\partial v(c)$ 
is a singleton. Take $\lambda_1, \lambda_2 \in \partial v(c)$, which, as we recall,
are optimal solutions to dual
problem \eqref{duallac}.
Let $x(c)$ be an optimal solution of  
\eqref{valuedefv} (recall
that \eqref{valuedefv} has optimal primal
and dual solutions). By the optimality conditions, we have
$\nabla f(x(c))+A^T \lambda_1=0$ and $\nabla f(x(c))+A^T \lambda_2=0$.
Therefore $\lambda_1-\lambda_2 \in \mbox{Ker}(A^T)$ 
and by Assumption (H2) we have Ker$(A^T)=\{0\}$ which implies $\lambda_1=\lambda_2$, i.e., 
$\partial v(c)$ is a singleton and (i) is shown. Therefore, for each $c\in \mathbb{R}^q$ there is a unique multiplier $\lambda(c)=\nabla v(c)$.

Let us now show (ii).
We will still denote by $x(c)$ an optimal solution of  
\eqref{valuedefv}. Take $c_1, c_2 \in \mathbb{R}^q$. By the optimality
conditions, we get
\begin{equation}\label{optcond}
\nabla f(x(c_i)) + A^T \lambda(c_i) = 0,i=1,2,
\end{equation}
and by complementary slackness
\begin{equation}\label{compslack}
\left \langle \lambda( c_i) , A x(c_i)-b+c_i \right \rangle  = 0,i=1,2.
\end{equation}
Therefore
\begin{equation}\label{secondsctheta22}
\begin{array}{lcl}
\left \langle A(x( c_2)  - x(c_1) )  ,   \lambda(c_1) - \lambda(c_2)  \right \rangle & = & \displaystyle \left \langle x( c_2)  - x(c_1)   , A^T (\lambda(c_1) - \lambda(c_2) ) \right \rangle \\
& \stackrel{\eqref{optcond}}{=} &\displaystyle  \left \langle x( c_2)  - x(c_1), \nabla f( x( c_2 )) -  \nabla f(x( c_1 )) \right \rangle  \\
& \stackrel{\eqref{cocoercive}}{\geq} &\displaystyle  (1/L(f)) \|\nabla f( x( c_2 )) -  \nabla f(x( c_1 )) \|_2^2\\
& \stackrel{\eqref{optcond}}{=} &\displaystyle  (1/L(f)) \| A^T ( \lambda(c_2) - \lambda(c_1) )  \|_2^2 \\
& \geq & \displaystyle \frac{\lambda_{\min}(A A^T )}{L(f)} \| \lambda(c_2) - \lambda(c_1)   \|_2^2.
\end{array}
\end{equation}
Observe that in
the first
inequality above, \eqref{cocoercive}
can be used because (H1) is satisfied. Next since $\lambda(c_1), \lambda(c_2) \geq 0$, we have 
\begin{equation}\label{feasib}
\left \langle \lambda(c_1) ,  A x(c_2)-b+c_2 \right \rangle \leq 0,\;
\left \langle \lambda(c_2) ,   A x(c_1)-b+c_1 \right \rangle \leq 0.
\end{equation}
It follows that 
\begin{equation}\label{vlipschitizfinal}
\begin{array}{l}
\left \langle A(x( c_2)  - x(c_1) )  ,   \lambda(c_1) - \lambda(c_2)  \right \rangle  = 
\left \langle Ax( c_2)-b  - ( A x(c_1)-b )  ,   \lambda(c_1) - \lambda(c_2)  \right \rangle \\
  \stackrel{\eqref{compslack}}{=}  
\left \langle \lambda(c_1), c_1 \right \rangle + \left \langle \lambda(c_2), c_2 \right \rangle + \left \langle \lambda(c_1), A x(c_2) -b \right \rangle  + \left \langle \lambda(c_2), A x(c_1) -b \right \rangle \\
 \stackrel{\eqref{feasib}}{\leq}  
\left \langle \lambda(c_2) - \lambda(c_1), c_2 - c_1 \right \rangle \leq  \| \lambda(c_2) - \lambda(c_1)\|_2 \|c_2 - c_1\|_2,
\end{array}
\end{equation}
where the last inequality is due to the 
Cauchy-Schwartz inequality.

Combining \eqref{secondsctheta22} and \eqref{vlipschitizfinal} we get 
$$
\|\nabla v(c_2) - \nabla v(c_1)\|_2=
\|\lambda(c_2) - \lambda(c_1)\|_2 \leq \frac{L(f)}{\lambda_{\min}(A A^T )} \|c_2 - c_1\|_2.
$$
Therefore, we have shown that
$v$ is differentiable and has
Lipschitz continuous gradient
with Lipschitz constant
$\frac{L(f)}{\lambda_{\min}(A A^T )}$.
Recalling that $v$ is convex,
using Proposition \ref{conjstconvex} we
deduce that
$v^*$ is strongly convex on $\mathbb{R}^q$
with constant of strong 
convexity $\frac{\lambda_{\min}(A A^T )}{L(f)}$.
Since $v^*=-\theta$ on $\mathbb{R}_+^q$, this shows
the strong concavity of $\theta$
on $\mathbb{R}_+^q$
with the constant of
strong concavity $\frac{\lambda_{\min}(A A^T )}{L(f)}$.\hfill
\end{proof}

\par {\textbf{Proofs of the strong concavity of $\theta$ for strongly convex $f$.}}
The proof of the strong concavity of $\theta$ when, additionally to
(H1) and (H2), the function $f$ is strongly convex is known.
It can be seen as a special case of Theorem 10 in \cite{YuNeely2015}.
For completeness, we provide below a simple proof of this result
and provide a new proof when $f$ is twice continuously differentiable.

\begin{prop}
Let Assumptions (H1) and (H2) hold.
Assume that $f$ is strongly convex on $\mathbb{R}^n$
with constant of strong convexity $\alpha$
with respect to
$\|\cdot\|_2$.
  Then the dual function $\theta$
given by \eqref{dualfunctionfirst}
 is strongly concave on $\mathbb{R}^q$
 with constant of strong concavity $\frac{\alpha \lambda_{\min}(A A^T )}{L(f)^2}$ with respect to norm $\|\cdot\|_2$.
\end{prop}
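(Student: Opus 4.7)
I would adapt the second proof of Proposition~\ref{dualfunctionpbinit0}, replacing the appeal to Baillon--Haddad cocoercivity (Theorem~\ref{theorBailHadd}) by a direct use of strong convexity of $f$ chained with the Lipschitz continuity of $\nabla f$; that chain will deliver exactly the constant $\alpha\lambda_{\min}(AA^T)/L(f)^2$ announced.

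Since a continuous strongly convex function is coercive, the inner minimization defining $\theta(\lambda)$ in \eqref{dualfunctionfirst} admits, for every $\lambda\in\mathbb{R}^q$, a unique minimizer $x(\lambda)$. Hence $\mathrm{dom}(-\theta)=\mathbb{R}^q$, $-\theta$ is everywhere subdifferentiable, and \eqref{gradtheta1} gives $\partial(-\theta)(\lambda)=\{-(Ax(\lambda)-b)\}$, so I can work directly on all of $\mathbb{R}^q$ without first restricting to the relative interior and invoking Lemma~\ref{simplelemmaconv}. Fixing $\lambda_1,\lambda_2\in\mathbb{R}^q$, setting $x_i=x(\lambda_i)$ and $s_i=-(Ax_i-b)$, and using the optimality condition \eqref{eqxlamb}, the first three lines of \eqref{secondsctheta2} yield the identity
$$\langle s_2-s_1,\lambda_2-\lambda_1\rangle = \langle x_2-x_1,\nabla f(x_2)-\nabla f(x_1)\rangle.$$

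At this step, instead of invoking \eqref{cocoercive}, I would combine two separate inequalities. The strong convexity characterization \eqref{charactscaux} gives $\langle x_2-x_1,\nabla f(x_2)-\nabla f(x_1)\rangle\geq \alpha\|x_2-x_1\|_2^2$, while the Lipschitz bound of (H1) applied in the form $\|x_2-x_1\|_2\geq \|\nabla f(x_2)-\nabla f(x_1)\|_2/L(f)$, together with \eqref{eqxlamb}, gives $\|x_2-x_1\|_2^2 \geq \|A^T(\lambda_2-\lambda_1)\|_2^2/L(f)^2$. Plugging these in and using $\|A^T v\|_2^2\geq \lambda_{\min}(AA^T)\|v\|_2^2$ from (H2), I obtain
$$\langle s_2-s_1,\lambda_2-\lambda_1\rangle \;\geq\; \frac{\alpha\,\lambda_{\min}(AA^T)}{L(f)^2}\|\lambda_2-\lambda_1\|_2^2,$$
which is characterization \eqref{charactscaux} for $-\theta$ on $\mathbb{R}^q$ with the announced constant. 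No step poses a real obstacle: the only design point is to use strong convexity of $f$ and Lipschitzness of $\nabla f$ as a matched pair, each applied in its natural direction, rather than the single cocoercive inequality used under (H1) alone.
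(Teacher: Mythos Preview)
Your proposal is correct and follows essentially the same argument as the paper: both obtain the identity $\langle s_2-s_1,\lambda_2-\lambda_1\rangle=\langle x_2-x_1,\nabla f(x_2)-\nabla f(x_1)\rangle$ from the optimality conditions, then bound below first by $\alpha\|x_2-x_1\|_2^2$ via strong convexity and next by $(\alpha/L(f)^2)\|A^T(\lambda_2-\lambda_1)\|_2^2$ via the Lipschitz bound on $\nabla f$ and \eqref{eqxlamb}, finishing with the Rayleigh-quotient estimate from (H2). The only cosmetic difference is that the paper phrases things in terms of $\nabla\theta$ (noting $\theta$ is differentiable since $S(\lambda)$ is a singleton) rather than the subdifferential, but the chain of inequalities in \eqref{secondsctheta4} is exactly yours.
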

\begin{proof} For any $\lambda \in \mathbb{R}^q$, due to the strong convexity of $f$, optimization problem
\eqref{dualfunctionfirst} has a unique optimal solution denoted by 
$x(\lambda)$. By \eqref{gradtheta1}, $\theta$ is differentiable
with $\nabla \theta(\lambda)=Ax(\lambda)-b$.
It follows that
\begin{equation}\label{secondsctheta4}
\begin{array}{lcl}
- \left \langle \nabla \theta( \lambda_2 ) - \nabla \theta( \lambda_1 ) ,  
\lambda_2 - \lambda_1  \right \rangle 
& = &
\displaystyle  - \langle A (x(\lambda_2) - x(\lambda_1)) , \lambda_2 - \lambda_1 \rangle,\\
& = &\displaystyle  - \langle x(\lambda_2) - x(\lambda_1) , A^T( \lambda_2 - \lambda_1 )  \rangle  \\
&\displaystyle  \stackrel{\eqref{eqxlamb}}{=} & \langle x(\lambda_2) - x(\lambda_1) , \nabla f( x( \lambda_2 )) -  \nabla f(x( \lambda_1 )) \rangle  \\
& \geq & \alpha \| x( \lambda_2 ) -  x( \lambda_1 ) \|_2^2 \\
& \stackrel{(H1)}{\geq} & \displaystyle (\alpha/L(f)^2) \|\nabla f(x( \lambda_2 ))- \nabla f(x( \lambda_1 ))   \|_2^2 \\
& = & \displaystyle (\alpha/L(f)^2) \| A^T(\lambda_2 - \lambda_1)\|_2^2 \\
& \geq &\displaystyle  \frac{\alpha \lambda_{\min}(A A^T )}{L(f)^2} \| \lambda_2 - \lambda_1   \|_2^2.
\end{array}
\end{equation}
In \eqref{secondsctheta4}, the first equality
comes from $\nabla \theta(\lambda)=Ax(\lambda)-b$,
the first inequality comes from the strong convexity of 
$f$, while the last equality comes 
from the optimality conditions.
This achieves the proof. \hfill
\end{proof}

Since we must have $\alpha \leq L(f)$, we get a smaller constant
of strong concavity than in the previous case where $f$ was not necessarily strongly
convex. We now provide a new proof when 
$f$ is strongly convex and twice continuously differentiable on $\mathbb{R}^n$.
\begin{prop}
Let Assumptions (H1) and (H2) hold.
Assume that $f$ is strongly convex on $\mathbb{R}^n$
with constant of strong convexity $\alpha$
with respect to
$\|\cdot\|_2$ and twice continuously differentiable
on $\mathbb{R}^n$.
  Then the dual function $\theta$
given by \eqref{dualfunctionfirst}
 is strongly concave on $\mathbb{R}^q$
 with constant of strong concavity 
not larger than
$\frac{1}{\alpha } \lambda_{\min}(A A^T)$ 
 with respect to norm $\|\cdot\|_2$.
\end{prop}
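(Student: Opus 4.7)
The plan is to work with the Hessian of $-\theta$ explicitly, exploiting the fact that the twice continuous differentiability and strong convexity of $f$ make $-\theta$ itself twice differentiable.

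First, I would use the strong convexity of $f$ to observe that for each $\lambda\in\mathbb{R}^q$ the inner minimization in \eqref{dualfunctionfirst} has a unique minimizer $x(\lambda)$ characterized by the first-order optimality condition $\nabla f(x(\lambda))+A^T\lambda=0$. Since $f$ is $C^2$ and $\nabla^2 f(x)\succeq \alpha I_n$ (in particular invertible) everywhere, the implicit function theorem applied to the map $(x,\lambda)\mapsto \nabla f(x)+A^T\lambda$ yields that $x(\cdot)$ is $C^1$ with
\begin{equation*}
\frac{\partial x}{\partial \lambda}(\lambda)=-\bigl(\nabla^2 f(x(\lambda))\bigr)^{-1}A^T.
\end{equation*}
Combining this with the identity $\nabla \theta(\lambda)=Ax(\lambda)-b$ (which follows from \eqref{gradtheta1} applied to the singleton $S(\lambda)=\{x(\lambda)\}$), I obtain that $\theta$ is $C^2$ and
\begin{equation*}
\nabla^2(-\theta)(\lambda)=A\bigl(\nabla^2 f(x(\lambda))\bigr)^{-1}A^T.
\end{equation*}

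Next I would translate the strong convexity assumption $\nabla^2 f(x)\succeq \alpha I_n$ into the matrix inequality $(\nabla^2 f(x))^{-1}\preceq \tfrac{1}{\alpha}I_n$ and deduce, by pre- and post-multiplying by $A$ and $A^T$, the PSD-order bound
\begin{equation*}
\nabla^2(-\theta)(\lambda)=A\bigl(\nabla^2 f(x(\lambda))\bigr)^{-1}A^T\preceq \frac{1}{\alpha}AA^T
\end{equation*}
for every $\lambda\in\mathbb{R}^q$. By the Courant--Fischer min-max characterization, the Loewner order is preserved by the minimum eigenvalue, so $\lambda_{\min}(\nabla^2(-\theta)(\lambda))\leq \lambda_{\min}(AA^T)/\alpha$ at every $\lambda$. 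Using the twice differentiable characterization of strong convexity recalled in Section~2, the best constant of strong concavity of $\theta$ equals $\inf_{\lambda}\lambda_{\min}(\nabla^2(-\theta)(\lambda))$, which is therefore not larger than $\lambda_{\min}(AA^T)/\alpha$, as claimed.

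The main obstacle I anticipate is not computational but conceptual: one must justify that $x(\lambda)$ is globally $C^1$ (not merely locally) so that the Hessian formula for $\theta$ is valid on the whole of $\mathbb{R}^q$, and one must carefully invoke the equivalence between strong convexity and a lower bound on the Hessian in the twice differentiable setting in order to read the constant of strong concavity off of $\lambda_{\min}(\nabla^2(-\theta))$. Both of these follow from standard facts (the implicit function theorem applies at every point since $\nabla^2 f(x(\lambda))$ is uniformly positive definite, and the Hessian characterization is the statement recalled before Definition~2.4), so the argument reduces to the PSD monotonicity step above.
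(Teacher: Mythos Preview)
Your proposal is correct and follows essentially the same route as the paper: both compute $\nabla^2\theta(\lambda)=-A[\nabla^2 f(x(\lambda))]^{-1}A^T$ via the Implicit Function Theorem, use $\nabla^2 f\succeq \alpha I_n$ to get $[\nabla^2 f]^{-1}\preceq \tfrac{1}{\alpha}I_n$, conjugate by $A$ to obtain $A[\nabla^2 f(x(\lambda))]^{-1}A^T\preceq \tfrac{1}{\alpha}AA^T$, and then pass to minimum eigenvalues. Your write-up is in fact a bit more careful than the paper's in spelling out the IFT step and the Loewner-order-to-$\lambda_{\min}$ implication.
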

\begin{proof} By the Implicit Function Theorem, $\theta$ is twice continuously differentiable
with 
$$
\nabla^2 \theta(\lambda)=-H_{x \lambda}^T H_{x x}^{-1} H_{x \lambda}
$$
where 
$$
H_{x \lambda}=\nabla_{x \lambda}^2 
\mathcal{L}(x(\lambda),\lambda)=A^T,\;H_{x x}=\nabla_{x x}^2 \mathcal{L}(x(\lambda),\lambda)=\nabla^2 f(x(\lambda)).
$$
Hence,
$$
\nabla^2 \theta(\lambda)=-A[\nabla^2 f(x(\lambda))]^{-1} A^{T}.
$$
The function $f$ being strongly convex with constant of strong convexity
$\alpha$ we have that $\nabla^2 f(x) \succeq \alpha I_n$ for all $x$
and therefore $\frac{1}{\alpha}I_n \succeq [\nabla^2 f(x)]^{-1}$.
Using Assumption (H2), matrix  $A[\nabla^2 f(x(\lambda))]^{-1} A^{T}$
is invertible for all $\lambda$ and satisfies
$\frac{1}{\alpha} A A^T \succeq A[\nabla^2 f(x(\lambda))]^{-1} A^{T}$
implying $\lambda_{\min}(A[\nabla^2 f(x(\lambda))]^{-1} A^{T}) \leq \frac{1}{\alpha } \lambda_{\min}(A A^T)$
which implies that $\theta$ is strongly concave with constant
of strong concavity not larger than $\frac{1}{\alpha } \lambda_{\min}(A A^T)$ with respect to
norm $\|\cdot\|_2$.\hfill
\end{proof}

\subsection{Applications}

We illustrate Proposition \ref{dualfunctionpbinit0} with 3 examples. The first example is a 
{\em{degenerate}} one and
corresponds to linear programs which indeed satisfy (H1)
and can satisfy (H2). However, as discussed
in Example \ref{lp1} below, for such problems
the domain $\mbox{dom}(\theta)=\{\lambda : \theta(\lambda)>-\infty\}$
of the dual function $\theta$
is either a singleton
or the empty set and such functions are indeed, by
definition, strongly concave even if this property
will not, in this case, be enlightening in practice.
\begin{ex}[Linear programs]\label{lp1} Let $f:\mathbb{R}^n \rightarrow \mathbb{R}$ be given by
\begin{equation}
f(x)=c^T x + c_0 
\end{equation}
where $c \in \mathbb{R}^n$, $c_0 \in \mathbb{R}$. Clearly $f$ is convex differentiable with Lipschitz continuous gradients; any
$L(f)>0$ being a valid Lipschitz constant. Proposition \ref{dualfunctionpbinit0}
tells us that if the rows of $A$ are 
linearly
independent then the dual function $\theta$ of \eqref{pbinit0}
given by \eqref{dualfunctionfirst} is strongly concave on $\mathbb{R}^q$.
In this case, the strong concavity can be checked directly by computing $\theta$.
Indeed, we have 
$$
f^*(x)=\left\{
\begin{array}{l}
-c_0  \;\mbox{    if }x=c,\\
+\infty  \mbox{ if }x \neq c,
\end{array}
\right.
$$
and plugging this expression of $f^*$ into \eqref{thetaintermsoffstar}, we get\footnote{In this simple case, the dual function
is well known and can also be obtained without using the conjugate of $f$}
$$
\theta(\lambda ) =\left\{
\begin{array}{ll}
-\lambda^T b +c_0 & \mbox{if }A^T \lambda = -c,\\
-\infty & \mbox{if }A^T \lambda \neq -c.
\end{array}
\right.
$$
Therefore if $c \in \mbox{Im}(A^T)$ then there is $\lambda \in \mathbb{R}^q$ such that 
\begin{equation}\label{equac}
 A^T \lambda = - c,
\end{equation}
and if the rows of
$A$ are linearly independent then there is only one $\lambda$, let us call it $\lambda_0$, satisfying \eqref{equac}. In this situation,
the domain of $\theta$ is a singleton: $\mbox{dom}(\theta)=\{\lambda_0\}$, and $\theta$ indeed is strongly concave (see Definition \ref{defsconv}).
If $c \notin \mbox{Im}(A^T )$ then $\mbox{dom}(\theta)=\emptyset$ and $\theta$ is again strongly concave. 
\end{ex}
The example which follows gives a class of problems where the dual function is strongly concave on $\mathbb{R}^q$ with $f$ not 
necessarily strongly convex.
\begin{ex}[Quadratic convex programs]\label{exquadprogconcth} Consider a problem of form \eqref{pbinit0}  
where $f(x)=\frac{1}{2} x^T Q_0 x + a_0^T x + b_0$, 
$Q_0$ is an $n\times n$ nonnull semidefinite positive matrix,
$A$ is a $q \times n$ real matrix, $a_0 \in \mbox{Im}(Q_0)$, and $b_0 \in \mathbb{R}$.
Clearly, $f$ is convex, differentiable, and  $\nabla f$ is Lipschitz continuous with Lipschitz constant
$L(f)=\|Q_0\|_2= \lambda_{\max}(Q_0)>0$ with respect to $\|\cdot\|_2$ on $\mathbb{R}^n$.
If the rows of $A$ are linearly independent, using Proposition \ref{dualfunctionpbinit0}
we obtain that the dual function of \eqref{pbinit0} is strongly concave with constant of
strong concavity $\frac{\lambda_{\min}(A A^T)}{\lambda_{\max}(Q_0)}>0$ with respect to norm $\|\cdot\|_2$ on $\mathbb{R}^q$.
Observe that strong concavity holds in particular if $Q_0$ is not definite positive, in which case $f$ is not strongly convex. For this example, strong concavity of $\theta$ is driven by the greatest
eigenvalue of $Q_0$ and by the lowest eigenvalue of
$A A^T$.

Since $f$ is convex, differentiable, its gradient being Lipschitz continuous with Lipschitz constant
$\lambda_{\max}(Q_0)$, from Proposition \ref{conjstconvex}, we know that $f^*$ is strongly convex with constant of
strong convexity $1/\lambda_{\max}(Q_0)$.
This can be checked by direct computation. Indeed, let
$\lambda_{\max}(Q_0)=\lambda_1(Q_0) \geq \lambda_{2}(Q_0) \geq \ldots \geq \lambda_{r}(Q_0)> \lambda_{r+1}(Q_0) = \lambda_{r+2}(Q_0)=\ldots = \lambda_{n}(Q_0)=0$ be the ordered eigenvalues of $Q_0$ where $r$ is the rank of $Q_0$. 
Let $P$ be a corresponding orthogonal matrix of eigenvectors for $Q_0$, i.e., $\emph{Diag}(\lambda_1(Q_0),\ldots,\lambda_n(Q_0))=P^T Q_0 P$ with $P P^T = P^T P =I_n$.
Defining 
$$
Q_0^{+}=P \emph{Diag}\Big(\frac{1}{\lambda_1(Q_0)},\ldots,\frac{1}{\lambda_{r}(Q_0)},\underbrace{0,\ldots,0}_{\mbox{n-r times}}\Big)P^T,
$$
it is straightforward to check that
\begin{equation}\label{expressionfstar}
f^*(x)=
\left\{
\begin{array}{ll}
-b_0+\frac{1}{2}(x-a_0)^T Q_0^{+}(x-a_0)&\mbox{if }x \in \mbox{Im}(Q_0),\\
+ \infty & \mbox{otherwise},
\end{array}
\right.
\end{equation}
and plugging expression \eqref{expressionfstar} of $f^*$ into \eqref{thetaintermsoffstar}, we get
$$
\theta(\lambda)=
\left\{
\begin{array}{ll}
b_0-\lambda^T b -\frac{1}{2}(a_0 + A^T \lambda)^T Q_0^{+}  (a_0 + A^T \lambda) & \mbox{if }A^T \lambda \in \mbox{Im}(Q_0),\\
-\infty & \mbox{otherwise.}
\end{array}
\right.
$$
If $x'=(x'_1,\ldots,x'_n)$ is the vector of the coordinates of $x$ 
in the basis $(v_1,v_2,\ldots,v_n)$ where $v_i$ is $i$th column of $P=[v_1,v_2,\ldots,v_n]$ (i.e., $(v_1,\ldots,v_r)$ is a basis
of Im($Q_0$) and $(v_{r+1},\ldots,v_n)$ is a basis of Ker($Q_0$)) and writing
$a_0 = \sum_{i=1}^r a'_{0 i} v_i$, we obtain
$$
f^*(x)=
\left\{
\begin{array}{l}
g(P^T x) \mbox{ where } g:\mathbb{R}^n \rightarrow \mathbb{R} \mbox{ is given by }g(x')  = -b_0 + \sum_{i=1}^r \frac{(x'_i - a'_{0 i} )^2}{2 \lambda_i( Q_0 )}
 \mbox{ if }x \in \mbox{Im}(Q_0),\\
+\infty \mbox{ otherwise.}
\end{array}
\right.
$$
Observe that for $x', y' \in \mathbb{R}^r \small{\times} \{ \underbrace{(0,\ldots,0)}_{\mbox{n-r times}} \}$
we have
$$
g(y') \geq g(x') + \nabla g(x')^T (y'-x') + \frac{1}{2 \lambda_1(Q_0)}\|y'-x'\|_2^2
$$
and $g$ is strongly convex with constant of strong convexity $\frac{1}{\lambda_1(Q_0)}$
with respect to norm $\|\cdot\|_2$ on $\mathbb{R}^r \small{\times} \{ \underbrace{(0,\ldots,0)}_{\mbox{n-r times}} \}$.
Recalling that $f^*(x)=g(P^T x)$ for $x \in \mbox{dom}(f^*)=\mbox{Im}(Q_0)$ and that
$P^T x \in \mathbb{R}^r \small{\times} \{ \underbrace{(0,\ldots,0)}_{\mbox{n-r times}} \}$ for $x \in \mbox{Im}(Q_0)$, we deduce that $f^*$
is strongly convex with constant of strong convexity
$$
\frac{\lambda_{\min}(P P^T)}{\lambda_1(Q_0)} = \frac{\lambda_{\min}(I_n)}{\lambda_{\max}( Q_0 )} = \frac{1}{\lambda_{\max}( Q_0 )}
$$
with respect to norm $\|\cdot\|_2$.
\end{ex}

\begin{ex}\label{genexample} Let $f(x)=\sum_{k=1}^M \alpha_k f_k(x)$ for $\alpha_k \in \mathbb{R}$
and $f_k: \mathbb{R}^n \rightarrow \mathbb{R}$ convex differentiable with Lipschitz constant
$L_k>0$ with respect to norm $\|\cdot\|_2$ on $\mathbb{R}^n$ for $k=1,\ldots,M$.
Let $A$ be a $q\times n$ matrix with
independent rows. Then the dual function \eqref{dualfunctionfirst}
of \eqref{pbinit0} is strongly concave on $\mathbb{R}^q$ with constant of strong concavity
$\lambda_{\min}(A A^T)/\sum_{k=1}^M \alpha_k L_k$  with respect to $\|\cdot\|_2$.
\end{ex}

\section{Problems with linear and nonlinear constraints}

We now consider problems
of form \eqref{defpblinnonlinstrongconc} with corresponding
dual function
\begin{equation}\label{defthetadualgen1}
\theta(\lambda , \mu ) = 
\left\{
\begin{array}{l}
\inf \;f(x) + \lambda^T ( Ax - b )  + \mu^T g(x) \\
x \in \mathbb{R}^n
\end{array}
\right.
\end{equation}
where 
$g(x)=(g_1(x),\ldots,g_p(x))$.
For this class of problems, the local
strong concavity of dual function \eqref{defthetadualgen1}
is given by the following theorem, which was shown in \cite{vguiguesMP2020}.
\begin{thm}\label{thmstrongconcloc} Consider the optimization  problem 
\begin{equation}\label{defpblinnonlinstrongconc1}
\inf_{x \in \mathbb{R}^n} \{ f(x) :  Ax \leq b, g_i(x) \leq 0, i=1,\ldots,p\},
\end{equation}
where $A$ is a $q\times n$ real matrix.
We assume that
\begin{itemize}
\item[(A1)] $f:\mathbb{R}^n \rightarrow \mathbb{R}$ is 
strongly convex and has Lipschitz continuous gradient on $\mathbb{R}^n$;
\item[(A2)] $g_i:\mathbb{R}^n \rightarrow \mathbb{R}, i=1,\ldots,p$, are convex and have Lipschitz continuous gradients;
\item[(A3)] if $x_*$ is the optimal solution of \eqref{defpblinnonlinstrongconc1} then
the rows of matrix  
 $\left(\begin{array}{c}A \\ J_g(x_* )\end{array}  \right)$ are linearly independent
 where $J_g(x)$ denotes the Jacobian matrix of $g(x)=(g_1(x),\ldots,g_p(x))$ at $x$;
\item[(A4)] there is $x_0 \in \mbox{ri}(\{g \leq 0\})$ such that $A x_0 \leq b$. 
\end{itemize}
Let $\theta$ be the dual function of problem \eqref{defpblinnonlinstrongconc1}:
\begin{equation}\label{defthetadualgen}
\theta(\lambda , \mu ) = 
\left\{
\begin{array}{l}
\inf \;f(x) + \lambda^T ( Ax - b )  + \mu^T g(x) \\
x \in \mathbb{R}^n.
\end{array}
\right.
\end{equation}
Let $(\lambda_* , \mu_* ) \geq 0$ be an optimal solution of the dual problem\footnote{Observe that 
the primal problem has a finite optimal
value and the assumptions of the
Convex Duality Theorem are satisfied
(Slater Assumption (A4) is satisfied
and
the primal objective is bounded from
below on the feasible set), implying
that the dual problem is feasible, has an optimal
solution, and the same optimal
value as the primal problem.}
$$
\sup_{\lambda \geq 0, \mu \geq 0} \theta(\lambda , \mu ).
$$
Then there is some  neighborhood $\mathcal{N}$ of $(\lambda_* , \mu_* )$ such that $\theta$ is strongly concave on $\mathcal{N} \cap \mathbb{R}_{+}^{p+q}$.
\end{thm}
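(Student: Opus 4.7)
The plan is to adapt the gradient-based argument from the second proof of Proposition \ref{dualfunctionpbinit0}, localizing all estimates in a small ball $\mathcal{N}$ around $(\lambda_*,\mu_*)$ in order to handle the nonlinearity of $g$. By Assumption (A1), the inner minimization in \eqref{defthetadualgen} is strongly convex in $x$ on $\mathbb{R}^q\times\mathbb{R}_+^p$, so it admits a unique minimizer $x(\lambda,\mu)$ characterized by
\begin{equation}\label{kktloc}
\nabla f(x(\lambda,\mu)) + A^T\lambda + J_g(x(\lambda,\mu))^T\mu = 0,
\end{equation}
and $\theta$ is differentiable there with $\nabla_\lambda\theta(\lambda,\mu) = Ax(\lambda,\mu)-b$ and $\nabla_\mu\theta(\lambda,\mu) = g(x(\lambda,\mu))$. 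A standard parametric-minimization argument (using strong convexity of $f$ and continuity of $J_g$) shows that $(\lambda,\mu)\mapsto x(\lambda,\mu)$ is continuous at $(\lambda_*,\mu_*)$, so $x(\lambda,\mu)$ remains in an arbitrarily small neighborhood of $x_*$ once $\mathcal{N}$ is chosen small enough.

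Given any two pairs $(\lambda_i,\mu_i)\in\mathcal{N}\cap\mathbb{R}_+^{p+q}$, set $x_i := x(\lambda_i,\mu_i)$ and use the componentwise mean-value theorem to write $g(x_2)-g(x_1) = M(x_2-x_1)$, where the $i$-th row of $M$ is $\nabla g_i(\xi_i)^T$ for some $\xi_i\in[x_1,x_2]$. Substituting the difference of the two instances of \eqref{kktloc} into the monotonicity-type quantity
\[
\Psi := \langle\nabla\theta(\lambda_2,\mu_2)-\nabla\theta(\lambda_1,\mu_1),\,(\lambda_2-\lambda_1,\mu_2-\mu_1)\rangle
\]
and rearranging yields
\[
\Psi = -\langle\nabla f(x_2)-\nabla f(x_1),\,x_2-x_1\rangle - \sum_{i=1}^{p}\mu_{2,i}\langle\nabla g_i(x_2)-\nabla g_i(x_1),\,x_2-x_1\rangle + E,
\]
with $E := \langle(M-J_g(x_1))^T(\mu_2-\mu_1),\,x_2-x_1\rangle$. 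The first term is $\leq -\alpha\|x_2-x_1\|_2^2$ by strong convexity of $f$; the second term is $\leq 0$ because each $g_i$ is convex (so $\nabla g_i$ is monotone) and $\mu_{2,i}\geq 0$; and since the rows of $M-J_g(x_1)$ are $\nabla g_i(\xi_i)^T-\nabla g_i(x_1)^T$, a bound $|E|\leq L'\|\mu_2-\mu_1\|_2\|x_2-x_1\|_2^2$ holds with $L'$ depending only on the Lipschitz constants of the $\nabla g_i$. Choosing the radius of $\mathcal{N}$ smaller than $\alpha/(4L')$ forces $L'\|\mu_2-\mu_1\|_2\leq \alpha/2$, and hence $\Psi\leq -\tfrac{\alpha}{2}\|x_2-x_1\|_2^2$.

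It remains to transfer this estimate from primal to dual variables. Rewriting the KKT difference as
\[
\left(\begin{array}{c} A\\ J_g(x_1)\end{array}\right)^{T}\left(\begin{array}{c}\lambda_2-\lambda_1\\ \mu_2-\mu_1\end{array}\right)
= -(\nabla f(x_2)-\nabla f(x_1)) - (J_g(x_2)-J_g(x_1))^T\mu_2,
\]
the right-hand side has norm at most $(L(f)+L'\|\mu_2\|_2)\|x_2-x_1\|_2$, while Assumption (A3) combined with continuity of $J_g$ ensures that the smallest eigenvalue of the Gramian of the block matrix on the left stays bounded below by a positive constant throughout $\mathcal{N}$. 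This yields $\|x_2-x_1\|_2\geq c\,\|(\lambda_2-\lambda_1,\mu_2-\mu_1)\|_2$ for some $c>0$ and therefore $\Psi\leq -c'\|(\lambda_2-\lambda_1,\mu_2-\mu_1)\|_2^2$. Applying characterization \eqref{charactscaux} on the convex set $\mathcal{N}\cap\mathbb{R}_+^{p+q}$, after a routine integration of the gradient inequality along segments to recover the convex-combination inequality of Definition \ref{defsconv}, yields the claimed strong concavity of $\theta$.

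The main obstacle is controlling the nonlinearity error $E$: its prefactor $\|\mu_2-\mu_1\|_2$ can be absorbed into $\alpha/2$ only by localizing in $\mu$, which is the structural reason the conclusion is merely local here, in contrast with the purely linear setting of Proposition \ref{dualfunctionpbinit0} where $E$ vanishes identically and strong concavity holds globally.
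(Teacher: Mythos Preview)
The paper does not actually supply a proof of Theorem~\ref{thmstrongconcloc}: it merely quotes the result and attributes the proof to \cite{vguiguesMP2020}. So there is no ``paper's own proof'' to match your argument against line by line. That said, your approach is sound and is precisely the natural localization of the gradient-monotonicity arguments the paper develops for the linear case. A few remarks on how your proof relates to what the paper does prove, and one small point to tighten.

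\medskip
\textbf{Relation to the linear-constraint proofs.} Although you describe your argument as an adaptation of the \emph{second} proof of Proposition~\ref{dualfunctionpbinit0} (the Baillon--Haddad/co-coercivity route), your computation is in fact closer in spirit to the chain \eqref{secondsctheta4}: you exploit strong convexity of $f$ to obtain $\Psi\le -\alpha\|x_2-x_1\|_2^2$, and you use the Lipschitz bound on $\nabla f$ only in the primal-to-dual transfer step. In the purely linear setting your error term $E$ vanishes identically and the block matrix reduces to $A$, so your argument collapses exactly to \eqref{secondsctheta4}; this is also the mechanism behind why the result is global there but only local here, as you correctly note at the end.

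\medskip
\textbf{Correctness.} The decomposition of $\Psi$ into the three pieces is algebraically exact (one checks that the cross terms from $J_g(x_1)^T\mu_1$ and $M^T\mu_1$ combine to give the stated $E$). The bound $|E|\le L'\|\mu_2-\mu_1\|_2\,\|x_2-x_1\|_2^2$ follows because each row of $M-J_g(x_1)$ is $\nabla g_i(\xi_i)-\nabla g_i(x_1)$ with $\xi_i\in[x_1,x_2]$ and the $\nabla g_i$ are Lipschitz. The primal-to-dual step is also correct: continuity of $(\lambda,\mu)\mapsto x(\lambda,\mu)$ (which you invoke via a standard parametric argument using strong convexity of the Lagrangian) together with (A3) guarantees a uniform lower bound on the smallest singular value of $\bigl(\begin{smallmatrix}A\\ J_g(x_1)\end{smallmatrix}\bigr)$ for $(\lambda_1,\mu_1)$ near $(\lambda_*,\mu_*)$, and $\|\mu_2\|_2$ is bounded on $\mathcal N$, so the constant $c$ exists. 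The final appeal to the monotone-gradient characterization of strong convexity on the convex set $\mathcal N\cap\mathbb R_+^{p+q}$ is legitimate because $\theta$ is differentiable there (the inner problem has a unique minimizer for every $\mu\ge 0$).

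\medskip
\textbf{One minor point.} You assert differentiability of $\theta$ on $\mathbb R^q\times\mathbb R_+^p$; strictly speaking you only need it on $\mathcal N\cap\mathbb R_+^{p+q}$, and there it holds because for $\mu\ge 0$ the Lagrangian is $\alpha$-strongly convex in $x$ (the linear term $\lambda^T(Ax-b)$ does not affect strong convexity regardless of the sign of $\lambda$). This is exactly what you use, so the slight overreach in the opening sentence is harmless.
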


Comparing Theorem \ref{thmstrongconcloc} where strong convexity of the
objective is required with Proposition \ref{dualfunctionpbinit0} which applies to 
problems with
convex (and possibly
non strongly convex) objectives, we can wonder if 
strong convexity can be relaxed to convexity in Theorem \ref{thmstrongconcloc}.
The answer is negative, as shown by the following example.
\begin{ex} Consider the optimization problem 
$$
(P_1) \;\displaystyle  \min_{x \in \mathbb{R}^n} \{c : x_i^2 \leq 1,i=1,\ldots,n\}
$$
which is of form \eqref{defpblinnonlinstrongconc}
with $f(x)=c$ constant, without linear constraints,
 and with constraint functions $g_i(x)=x_i^2-1,$
 $i=1,\ldots,n$, which satisfy Assumption (A2).
 Any feasible $x_*$ with all components nonnull 
 is an optimal solution of $(P_1)$ satisfying Assumption (A3)
 since the rows of $J_g(x_*)=2\emph{Diag}(x_*)$ are linearly independent.
 Clearly (A4) is also satisfied. However, (A1) is not satisfied.
For this example, for $\mu \geq 0$ dual function $\theta$ is given 
by 
$$
\theta(\mu)=\displaystyle c+\min_{x \in \mathbb{R}^n} \sum_{i=1}^n \mu_i(x_i^2-1)
=c-\sum_{i=1}^n \mu_i
$$
and is therefore not strongly concave.  This shows that
the conclusion of Theorem \ref{thmstrongconcloc}
may fail if we replace 
strong convexity by convexity in
Assumption (A1). Observe also that 
nonlinear constraints of $(P_1)$ can be written as
$A x \leq b$ where $b$ is a vector of ones of size $2n$ and 
where the rows of $A=[I_n;-I_n]$ are not linearly independent. 
\end{ex}

It is also natural to wonder if in Proposition \ref{dualfunctionpbinit0}
and Theorem \ref{thmstrongconcloc}, Assumptions (H2) and (A3) can be relaxed
assuming that the gradients of the active constraints at an optimal solution
are linearly independent, instead of assuming that the gradients of all constraints
at an optimal solution are linearly independent. 

For problems with linear constraints
of form \eqref{pbinit0}, from representation 
\eqref{thetaintermsoffstar}, if $\theta$
is strongly concave on $\mathbb{R}^q$
then $\lambda \rightarrow f^*(-A^T \lambda)$
is strongly convex. If $0 \in \mbox{dom}(f^*)$
this implies that Ker$(A^T)=\{0\}$ and therefore that
Assumption (H2) must hold
otherwise $f^*(-A^T \lambda)$
would be constant equal to $f^*(0)$
on the vector space Ker$(A^T)$ of positive dimension
which is not possible for a strongly convex function
with $0 \in \mbox{dom}(f^*)$.
Similarly, the following example shows that 
in Theorem \ref{thmstrongconcloc}, Assumption 
(A3) cannot be relaxed assuming that the gradients
of the active constraints at the optimal solution are linearly
independent. 
\begin{ex} Consider the optimization problem
$$
(P_2) \left\{ 
\begin{array}{l}
\displaystyle \min_{x \in \mathbb{R}^n} \;  \frac{1}{2}\sum_{i=1}^n x_i^2 \\
\displaystyle-\sum_{i=1}^n x_i \leq -1,\\
\displaystyle\sum_{i=1}^n x_i^2 -1 \leq 0,
\end{array}
\right.
$$
of form \eqref{defpblinnonlinstrongconc}
satisfying (A4), with $A=-e^T$ where $e$ is a vector
of ones of dimension $n$,
$f(x)=\frac{1}{2}\sum_{i=1}^n x_i^2$ satisfying (A1),
and $p=1,$ $g_1(x)=\sum_{i=1}^n x_i^2 -1$ satisfying (A2).
The optimal solution of this problem is $x_*=\frac{1}{n}e$
with corresponding optimal value $\frac{1}{2n}$
and only the constraint $-\sum_{i=1}^n x_i \leq -1$
is active at $x_*$. For this problem, for $\lambda, \mu \geq 0$,
dual function $\theta$ is given by
$$
\begin{array}{lcl}
\theta(\lambda,\mu) & = & 
\displaystyle
\min_{x \in \mathbb{R}^n} \frac{1}{2}\sum_{i=1}^n x_i^2 + \lambda (1-\sum_{i=1}^n x_i) + \mu (\sum_{i=1}^n x_i^2 -1)\\
& = &\displaystyle \lambda - \mu - \frac{n}{2} \frac{\lambda^2}{1+2\mu}.
\end{array}
$$
The Hessian matrix of $\theta$ at $(\lambda,\mu)\geq 0$ is given by
$$
\nabla^2 \theta(\lambda,\mu)=
\displaystyle \left( 
\begin{array}{cc}
\displaystyle \frac{-n}{1+2\mu} & \displaystyle \frac{2n \lambda}{(1+2\mu)^2} \\
\displaystyle \frac{2n\lambda}{(1+2\mu)^2} &\displaystyle  \frac{-4n\lambda^2}{(1+2\mu)^3} 
\end{array}
\right).
$$ 
Observe that $0$ is an eigenvalue of $\nabla^2 \theta(\lambda,\mu)$
with $(\frac{2\lambda}{1+2\mu},1)$
a corresponding eigenvector, the other eigenvalue
being $-\frac{n}{1+2\mu}-\frac{4n\lambda^2}{(1+2\mu)^3}$ which is negative
for $\lambda, \mu \geq 0$. Therefore for all $\lambda,\mu \geq 0$
we have that $\nabla^2 \theta(\lambda,\mu)$
is semidefinite negative but not definite negative implying that
$\theta$ is not strongly concave on any set of positive measure
contained in $\mathbb{R}_{+}^2$ and in particular there is no neighborhood
$\mathcal{N}_*$
of the optimal dual solution $\lambda_*, \mu_*$
such that $\theta$ is strongly concave on $\mathcal{N}_* \cap \mathbb{R}_{+}^2$.
Finally, observe that strong duality holds and 
$\lambda_*=\frac{1}{n}$, $\mu_*=0$ since the dual problem is
$$
\begin{array}{lcl}
\displaystyle \max_{\lambda, \mu \geq 0} \theta(\lambda,\mu)
& = &\displaystyle \max_{\lambda \geq 0} \max_{\mu \geq 0} \lambda -\mu -\frac{n}{2}\frac{\lambda^2}{1+2\mu} \\
& = & \max\left(\displaystyle \max_{\lambda \geq \frac{1}{\sqrt{n}}} \frac{1}{2} + \lambda(1-\sqrt{n}),\displaystyle \max_{0 \leq \lambda \leq \frac{1}{\sqrt{n}}} \lambda - \frac{n}{2}\lambda^2\right) \\
& = &
\max\left(\displaystyle -\frac{1}{2} + \frac{1}{\sqrt{n}},\displaystyle \frac{1}{2n}\right)=\displaystyle \frac{1}{2n}, 
\end{array}
$$
whose optimal value is indeed the optimal value $\frac{1}{2n}$
of the primal problem attained at $\lambda_*=\frac{1}{n}$, $\mu_*=0$.
Therefore for this problem, the gradient of the active constraint
at $x_*$ is $-e$ and is consequently linearly independent whereas
the gradients of the constraints at $x_*$ are
$-e$ and $\frac{2}{n}e$ and are therefore not linearly independent.
This shows that the conclusion of 
Theorem \ref{thmstrongconcloc}
does not hold if instead
of assuming that the gradients of all constraint functions 
at the optimal solution $x_*$
are linearly independent
 we assume that the gradients of the active
constraint functions 
at the optimal solution 
are linearly independent.
\end{ex}

\section{Conclusion}

In this paper we analyzed the strong concavity of the dual
function of an optimization problem. A possible extension
would be to show this property for some classes of problems
when the dual function is obtained dualizing only some of the
constraints.

\section*{Acknowledgments} Research of the author was supported by 
CNPq grant 304887/2019-6. 
The author would like to thank 
the referee and
the Managing Editor in charge of the manuscript  
for helpful suggestions
that improved the presentation.

\addcontentsline{toc}{section}{References}
\bibliographystyle{plain}
\bibliography{Strong_Concavity_Dual}

\end{document}